\newtheorem{theorem}{Theorem}[section]
\DeclareMathOperator*{\argmax}{arg\,max}
\begin{document}
\begin{frontmatter}

\title{Perfect Sampling and Gradient Simulation for Fork-Join Networks}


\author[focal,rvt]{Xinyun Chen\corref{cor1}}
    \ead{xinyun.chen@whu.edu.cn}
\author[rvt]{Xianjun Shi}
    \ead{xianjun.shi@stonybrook.edu}

\cortext[cor1]{Corresponding author}

\address[focal]{Economics and Management School, Wuhan University, Wuhan, Hubei, 430072, China}
\address[rvt]{Department of Applied Mathematics and Statistics,
Stony Brook University, Stony Brook, NY 11794, USA} 

\begin{abstract}
Fork-join network is a class of queueing networks with applications in manufactory, healthcare and computation systems. In this paper, we develop a simulation algorithm that (1) generates i.i.d. samples of the job sojourn time, jointly with the number of waiting tasks, exactly following the steady-state distribution, and (2) unbiased estimators of the derivatives of the job sojourn time with respect to the service rates of the servers in the network. The algorithm is designed based on the Coupling from the Past (CFTP) and Infinitesimal Perturbation Analysis (IPA) techniques. Two numerical examples are reported, including the special 2-station case where analytic results on the steady-state distribution is known and a 10-station network with a bottleneck.  

\end{abstract}

\begin{keyword}
Perfect Sampling\sep Gradient Estimation\sep Queueing Networks 
\end{keyword}

\end{frontmatter}

\section{Introduction}
A fork-join network is a special type of queueing networks. When a \textit{job} arrives at the network, it splits into several parts, which we call the \textit{tasks}, to be served in different service stations. After being served, the tasks join together again to form the output. This step is usually called synchronization of the tasks. There are two types of synchronization: exchangeable synchronization(ES) and non-exchangeable synchronization (NES). In our paper, we shall focus on fork-join networks with NES, which we shall explain in one moment. For the definition and examples of ES fork-join network, please see \cite{Pang16} and the references therein.

In an NES fork-join network, each task is tagged with the job it comes from and  can get synchronized only when all the other tasks from the same job have been served. Such type of networks have many application in healthcare and parallel computing system (see \cite{Pang16} and the references therein). For example,  in the Map-Reduce scheduling \cite{DG04}, a large data set (job) is splitted into several small sets (tasks). The tasks are processed in different servers, and then the computing results of the tasks are sent to a single server to form the final output of the job. As each job contains different data, tasks from different jobs can not be mixed up. Another example is the procedure of diagnosis \cite{AIMMT15}. A doctor may ask one patient to do several medical examinations. The results of all the examinations must be ready before the doctor can make the diagnosis. Besides, the results from different patients can not be mixed up.

In this paper, we consider a fundamental NES model where the number of service station and the number of tasks in each job are both equal to a constant $K$. Two important performance measures of such networks are the job sojourn time, that is from the arrival time of the job to the time when all its tasks got synchronized, and the number of tasks waiting in the system. Both of them are random variables and their steady-state distribution depict the long-run performance of the system. Unfortunately,  even for this fundamental model, there is no analytic result on the steady-state distribution except for the very special case when the system is Markovian and has only 2 stations \cite{FH84, NT93}. Moreover, in engineering problems such as optimal allocation of the service resource, one also needs to know the sensitivities of the performance measures with respect to the model parameters.

We shall deal with these performance measure and sensitivity analysis problems using the perfect-sampling technique. In particular, under mild condition, we develop an algorithm that can generate i.i.d. sample following exactly the steady-state (joint) distribution of the job sojourn time and the number of waiting tasks in the fork-join network. In addition, the algorithm generates unbiased estimators of the derivatives of the mean job sojourn time with respect to the service rates of the stations.

Our work is closely related to the recently growing literature on the perfect-sampling algorithms for queues and queueing networks \cite{BC14, BDGP15, CK14, MT06, S12}. In \cite{BC14}, the authors develop a perfect-sampling algorithm for the virtual waiting time of stochastic fluid networks. We use their algorithm to simulate the job sojourn time and extend it to the number of waiting tasks. The gradient simulation part is based on the IPA approach developed in \cite{G92}.

The rest of paper is organized as follows. In Section 2, we give the mathematic description of the model. Section 3 gives detailed explanation and the proofs of the algorithm. Numerical results are reported in Section 4.
\section{Notation, Model and Problem Setting}
Through out the paper, we shall use boldface to represent a vector in $\mathbb{R}^K$ for some $K\geq 2$. For example, $\mathbf{J}=(J_1, J_2,...,J_K)$ is a vector in $\mathbb{R}^K$ and $J_k$ is the $k$-th component of $\mathbf{J}$. $\mathbf{1}=(1,1,...,1)\in \mathbb{R}^K$ and $\mathbf{e}^k$ represents the vector in $\mathbb{R}^K$ whose $k$-th component equals to 1 and all other components are 0.


We consider a fork-join network with $K$ parallel service stations indexed as station 1 to station $K$. Jobs arrive according to some renewal process with i.i.d. inter-arrival times $\{I(n)\}$, i.e., $I(n)$ is the time between the arrival of the $n-1$-th and $n$-th job. Upon arrival,  each job split into $K$ tasks and the $k$-th task is sent to and will be processed by service station $k$.  We shall denote the $k$-th task from the $n$-th job as $T_k(n)$. Let $J_{k}(n)$ be the service requirement (or time) of $T_{k}(n)$. For fixed $n$, $J_{k}(n)$ can be correlated and follow different distributions, but the sequence of random vectors $\{\mathbf{J}(n)\}$ are i.i.d. in $\mathbb{R}^K$. Besides, $\{I(n)\}$ and $\{\mathbf{J}(n)\}$ are independent. After they finish service, tasks will join a ``unsynchronized queue" until all the other tasks from the same job finish service. Once upon the last task finishes service, the $K$ tasks from the same job will get synchronized and the job leaves the system immediately. 

It is known in \cite{PJ89} that when $\max_k E[J_{k}(n)]<E[I(n)]$, the fork-join network is stable and its workload process has a unique stationary distribution. So are the sojourn time of each job and the total number of waiting tasks in the system. Our goal is to develop algorithms to generate i.i.d. samples following exactly the stationary (joint) distribution of the job sojourn time and the number of waiting tasks in the system, and to generate unbiased estimators for the derivatives of the mean job sojourn time with respect to the service rates of the stations. 

In the rest of the paper we shall impose the following assumptions:\\

\noindent\textbf{Assumptions}\\
(A1) $\max_k E[J_{k}(n)]<E[I(n)]$ so that the system is stable.\\
(A2) There exists $\theta_k>0$ for $k=1,2,...,K$ such that
$E[\exp(\sum_k \theta_kJ_{k}(n))]<\infty$.

\section{Algorithm Description}
\subsection{Perfect sampling for the job sojourn time}
Recall that $T_{k}(n)$ is the $k$-th task from the $n$-th job. Let $\{W_{k}(n)\}$ be the waiting time of task $T_{k}(n)$ before entering service at station $k$. Then we have
\begin{equation}\label{eq:W}
 W_{k}(n)=(W_{k}(n-1)+J_{k}(n-1)-I(n))^+,\end{equation}
and the total sojourn time of task $T_{k}(n)$ at station $k$ is 
$$S_{k}(n)=W_{k}(n)+J_{k}(n).$$
In other words, the $k$-th task will enter the unsynchronized queue $S_{k}(n)$ units of time after the job enter the system. Since there is no synchronization time in our setting, the job will leave the system immediately once all its $K$ tasks enter the unsynchronized queue. Therefore, the total sojourn time of the $n$-th job is simply
$$S(n)=\max_{1\leq k\leq K}S_{k}(n).$$

Let $\mathbf{W}(n)=(W_1(n),...,W_K(n))$. Then, following \eqref{eq:W}, $\{\mathbf{W}(n)\}$ is a Markov process in $\mathbb{R}^K$. In \cite{BC14}, the authors construct a stationary version of $\{\mathbf{W}^*(-n)\}_{n\geq 0}$ \textit{backward in time} that is expressed as the difference of a random walk and its maximum in the future. In detail, for all $n\geq 0$
$$\mathbf{W}^*(-n)=\mathbf{M}(n)-\mathbf{R}(n),$$
where $\mathbf{R}(n)$ is a random walk in $\mathbb{R}^K$ such that
$$\mathbf{R}(n)=\mathbf{R}(n-1)+\mathbf{J}(n)-I(n)\mathbf{1}, \mathbf{R}(0)=0.$$
And,
$$\mathbf{M}(n)=\max_{m\geq n}\mathbf{R}(n),$$
where the maximum is taken component by component.\\

Using Algorithm 4 in \cite{BC14}, we can simulate i.i.d. sample paths of $\{\mathbf{W}^*(n)\}_{n=-N}^0$ for any $N<\infty$. In particular, $\mathbf{W}^*(0)$ follows the stationary distribution of the task waiting time. Let $\mathbf{J}(0)$ follows the distribution of $\mathbf{J}$ and is independent of $\{\mathbf{R}(n)\}_{n\geq 1}$, representing the job requirement of the 0-th job. Then, 
$$S^*(0)=\max_{1\leq k\leq K}(W^*_{k}(0)+J_{k}(0))$$ 
is the sojourn time of the 0-th job in the stationary sample path, and therefore follows the stationary distribution of the job sojourn time in the fork-join system. \\

\subsection{Perfect sampling for the number of tasks}
From the discrete sequence of $\{\mathbf{W}^*(n)\}_{n\leq 0}$, we can derive a stationary sample path the fork-join system in continuous time. In the first step, we replace $I(1)$ with $I^*(1)$ following the equilibrium distribution of $I(n)$. Let $A(-n)=-\sum_{m=1}^n I(i)$. Then the sample path of the fork join system in  continuous time is as follows. The $n$-th job \textit{in the past}  arrive at time $A(-n)$ and brings tasks with service requirement $\mathbf{J}(n)$ to the system and its sojourn time is $S^*(-n)$ and each of its task at the k-th service station waits $W^*_{k}(-n)$ units of time before entering service.

Now we show how to compute the number of tasks in the system using the sample path information. At time $t$, let $Q_k(t)$ be the number of tasks in the $k$-th service station, including the one in service, and $D_k(t)$ be the number of tasks in the $k$-th unsynchronized queue. Then,  $Q_k(t)$ and $D_k(t)$ can be expressed as:
\begin{footnotesize}\begin{eqnarray*}
\begin{cases}
Q_k(t)=&\sum_{n\geq 1}1(A(-n)+W^*_{k}(-n)+J_{k}(n)>t\\&\text{ and }A(-n)<t),\\ 
D_k(t)=&\sum_{n\geq 1}1(A(-n)+W^*_{k}(-n)+J_{k}(n)<t\\
& \text{ and }A(-n)+S^*(-n)>t).
\end{cases}
\end{eqnarray*}\end{footnotesize}
As the service is under a FCFS discipline, $A(-n)+W^*_{k}(-n)+J_{k}(n)$ is strictly decreasing in $n$ and goes to $-\infty$ as $n\to\infty$. So we will eventually see $A(-N)+W^*_{k}(-N)+J_{k}(N)<0$ for all $k$ at some finite $N$ and then we have
\begin{footnotesize}\begin{equation}
\begin{cases}\label{eq: QD}Q_k(0)=&\sum_{n=1}^{N}1(A(-n)+W^*_{k}(-n)+J_{k}(n)>0)\\
D_k(0)=&\sum_{n=1}^{N}1(A(-n)+W^*_{k}(-n)+J_{k}(n)<0\\&\text{ and }
A(-n)+S^*(-n)>0).\end{cases}\end{equation}\end{footnotesize}
In other words, we just need to simulate $\{\mathbf{W}^*(n)\}$ backwards in time until $n=N$ such that $A(-N)+W^*_{k}(-N)+J_{k}(N)<0$ for all $k$ (and hence $A(-N)+S^*(-N)<0$) and stop. Then, we compute $Q_k(0)$ and $D_k(0)$ according to \eqref{eq: QD}, which jointly follow the stationary distribution of the number of tasks in service stations and in the unsynchronized queues. 

\subsection{Gradient simulation}
In some engineering problem, we want to know the impact of the service resource allocation among different stations on the mean sojourn time of jobs. This requires a sensitivity analysis of the job sojourn time with respect to the the service capacity of each station. In order to represent the service capacity mathematically, we introduce $\mu_k$ as the ``service rate" of service station $k$ for all $k=1,2,...,K$, such that the service time of a task  with service requirement of $J_{k}(n)$ at station $k$ is $J_{k}(n)/\mu_k$. Under this new setting, the stability condition (A1) now becomes: 
$$
\begin{array}{lc}
\text{(A3)  } &\max _{1\leq k\leq K} E[J_{k}(n)/\mu_k] < E[I(n)],
\end{array} $$which we shall impose throughout this section.

The sensitivity analysis involves computing the derivatives $\frac{\partial E[S^*(0)]}{\partial \mu_k}$ for all $k=1,2,...,K$. In this section, we shall develop a simulation algorithm generating i.i.d. samples from some random vector $\mathbf{H}^*=(H^*_1,...H^*_K)$, such that $E[H^*_k]=\frac{\partial E[S^*(0)]}{\partial \mu_k}$ for all $1\leq k\leq K$. 

Suppose that $\mathbf{W}^*(\boldsymbol{\mu})$ follows the stationary distribution of $\mathbf{W}(n)$ under the service rate vector  $\boldsymbol{\mu}=(\mu_1,...,\mu_K)$. Suppose $\mathbf{J}(0)$ follows the job requirement distribution and is independent of $\mathbf{W}^*(\boldsymbol{\mu})$, then
 \begin{footnotesize}\begin{equation*}
 S^*(\boldsymbol{\mu}; 0) = \max_{1\leq k\leq K}(W^*_{k}(\boldsymbol{\mu})+J_{k}(0)/\mu_k)\doteq f(\mathbf{W}^*(\boldsymbol{\mu}), \mathbf{J}(0), \boldsymbol{\mu}),
 \end{equation*}\end{footnotesize}
follows the stationary distribution of the job sojourn time under the service rate vector $\boldsymbol{\mu}$.

Suppose now we have a stationary version of $\{\mathbf{W}^*(n)\}_{n\leq 0}$ as generated in Section 3.1. For each $k$, define $\tau_k=\max\{n: W^*_{k}(n)=0\}>-\infty$ w.p.1 given Assumption (A3). The following result gives our construction of the gradient estimator. To simplify the expression, we define     $$h(\mathbf{W}, \mathbf{J}, \boldsymbol{\mu}) \doteq \argmax_{1\leq l\leq K} W_{l}+J_{l}/\mu_l.$$

\begin{theorem}\label{thm}
For all $1\leq k\leq K$:
\begin{enumerate}
\item $$V_k^*=-\sum_{n=1}^{-\tau_k}\frac{J_{k}(n)}{\mu_k^2}$$
is an unbiased estimator of $\frac{\partial W^*_k(0)}{\partial \mu_k}$ such that $E[V^*_k]=\frac{\partial W^*_k(0)}{\partial \mu_k}<\infty$.
\item
\begin{footnotesize}\begin{equation}\label{eq: H}H^*_k\doteq1(k=h(\mathbf{W}^*(0), \mathbf{J}(0), \boldsymbol{\mu})  )\left(V^*_k-\frac{J_{k}(0)}{\mu_k^2}\right)\end{equation}\end{footnotesize}
is an unbiased estimator for $\frac{\partial E[S^*(\boldsymbol{\mu}; 0)]}{\partial \mu_k}$.
\end{enumerate}
\end{theorem}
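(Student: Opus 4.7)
The plan is to treat both claims via Infinitesimal Perturbation Analysis (IPA): exhibit a pathwise derivative of the quantity inside the expectation, verify it coincides with the claimed estimator, and then justify the interchange of differentiation and expectation by dominated convergence, with Assumption (A2) supplying the required integrable envelope.

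For part 1, the starting point is that between the regeneration epoch $\tau_k$ (the last time before $0$ at which $W_k^*$ vanishes) and time $0$, the positive-part operator in the Lindley recursion \eqref{eq:W} is inactive. Hence, in the stationary backward path,
$$W_k^*(0) \;=\; \sum_{n=\tau_k}^{-1}\frac{J_k(n)}{\mu_k}\;-\;\sum_{n=\tau_k+1}^{0} I(n),$$
so that, for every realization with $\tau_k > -\infty$, direct term-by-term differentiation gives the pathwise derivative $-\sum_{n=\tau_k}^{-1} J_k(n)/\mu_k^2$, which matches $V_k^*$ after the reindexing $n\mapsto -n$ used in the paper. A coupling argument in which the systems at rates $\mu_k$ and $\mu_k+\delta$ are driven by the same $\{I(n),J_k(n)\}$ shows that $\tau_k$ is locally constant almost surely and bounds the difference quotient by $|V_k^*|$ computed under a slightly smaller rate. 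Finiteness of $E[|V_k^*|]$ then follows by Wald's identity from $E[-\tau_k]<\infty$, which is a classical consequence of (A3) together with (A2).

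For part 2, on the event that the argmax in the definition of $S^*(\boldsymbol{\mu};0)$ is unique, the chain rule for the maximum gives
$$\frac{\partial}{\partial \mu_k} \max_{1\le l\le K}\!\Bigl(W_l^*(\boldsymbol{\mu})+\tfrac{J_l(0)}{\mu_l}\Bigr) \;=\; \mathbf{1}\bigl(k=h(\mathbf{W}^*(0),\mathbf{J}(0),\boldsymbol{\mu})\bigr)\Bigl(V_k^* - \tfrac{J_k(0)}{\mu_k^2}\Bigr),$$
which is exactly $H_k^*$. Almost-sure uniqueness of $h$ follows because $\mathbf{J}(0)$ is independent of $\mathbf{W}^*(\boldsymbol{\mu})$ and has a continuous joint distribution (inherited from the finiteness of the joint MGF in (A2), after observing that the exponentially-decaying MGF rules out atoms in the relevant marginals). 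Combining this with $|H_k^*|\le |V_k^*|+J_k(0)/\mu_k^2$ and the integrability established in part 1, dominated convergence again justifies the interchange and yields $E[H_k^*]=\partial E[S^*(\boldsymbol{\mu};0)]/\partial\mu_k$.

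The step I expect to be the main obstacle is producing a single, uniformly integrable dominator for the difference quotients on a full neighborhood of $\boldsymbol{\mu}$, rather than only pointwise at $\boldsymbol{\mu}$. Concretely, I need finiteness of $E[-\tau_k]$ (and a moment of the summed service requirements through the regenerative window) that is uniform in a neighborhood of the nominal rate vector. This is the familiar delicate point in IPA for queues and is precisely where Assumption (A2) is used: it guarantees an exponential moment of the residual busy period containing time $0$, by the Cramér-type arguments underlying \cite{G92}, which I would invoke to close the domination step.
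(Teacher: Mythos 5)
Your overall route is the same as the paper's: an IPA pathwise derivative for the Lindley recursion truncated at the last regeneration epoch $\tau_k$, followed by a dominated-convergence argument to interchange limit and expectation, with the max handled through the (a.s.\ unique) argmax. The paper obtains $V_k^*$ by citing Glasserman's product-form IPA estimator and computing $\partial\phi/\partial W$, $\partial\phi/\partial\mu$, whereas you unroll the recursion over the current busy period explicitly; the paper handles part 2 by writing out the difference quotient and splitting on whether the argmax switches under the perturbation, whereas you invoke the chain rule for the max directly --- these are the same argument in the limit, and your version is, if anything, slightly more self-contained.

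There is, however, one genuine error: you assert that almost-sure uniqueness of the argmax $h(\mathbf{W}^*(0),\mathbf{J}(0),\boldsymbol{\mu})$ is ``inherited from the finiteness of the joint MGF in (A2),'' on the grounds that an exponentially decaying MGF rules out atoms. That implication is false --- any bounded (in particular, any discrete) distribution has an everywhere-finite MGF, so (A2) says nothing about atoms or continuity of the $J_k(n)$. The paper does not derive uniqueness from its assumptions either; it \emph{adds} the hypothesis that the $J_k(n)$ are continuous random variables in a remark immediately after the theorem statement, and only then is the indicator $1(k=h(\cdot))$ well defined w.p.1. You need to either assume continuity of the service requirements explicitly, or handle ties in the argmax (where $H_k^*$ as written is not the one-sided derivative). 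A smaller imprecision: Wald's identity does not directly give $E[|V_k^*|]<\infty$, since $-\tau_k$ is not a stopping time for the summands $J_k(n)$ (the event $\{\tau_k=-m\}$ depends on the entire path determining $W_k^*(-m)=0$); the finiteness should instead be extracted from the exponential moment in (A2) via the tail of the stationary busy-period age, which is essentially what you gesture at in your closing paragraph.
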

\textbf{Remark:} Under the assumption that $J_{k}(n)$ are continuous random variables, \begin{footnotesize}$P(|\argmax_{1\leq k\leq K} (W_{k}^*(0)+J_{k}(0)/\mu_k) |~ >1)=0$\end{footnotesize}, so \begin{footnotesize}$1(k=h(\mathbf{W}^*(0), \mathbf{J}(0), \boldsymbol{\mu}))$\end{footnotesize} is well defined w.p.1.)
\begin{proof}[Proof of Theorem \ref{thm}]
Since the tasks are served independently with no feedback, for each $k$, $\{W_{k}(n)\}$ is itself a Markov process in $\mathbb{R}$ satisfying the recursion:
\begin{footnotesize}\begin{eqnarray*}
W^*_{k}(n) &=& (W^*_{k}(n-1)+J_{k}(n-1)/ \mu_k-I(n))^+\\
&\doteq&\phi(W^*_{k}(n-1),\mu_k).
\end{eqnarray*}\end{footnotesize}
Applying the IPA results in \cite{G92},
\begin{footnotesize}$$\sum_{j=0}^{\infty}\left(\prod_{i = n-j}^{n-1} \frac{\partial\phi}{\partial W}(W^*_{k}(i),\mu_k)\right)\frac{\partial \phi}{\partial \mu}(W^*_{k}(n-j-1),\mu_k)$$\end{footnotesize}
is an unbiased estimator if this infinity sum is well-defined. We can compute that 
\begin{align*} 
\frac{\partial\phi}{\partial W}(W^*_{k}(n),\mu_k)&=1(W^*_{k}(n+1)>0),\\
\frac{\partial \phi}{\partial \mu}(W^*_{k}(n),\mu_k)&=-1(W^*_{k}(n+1)>0)\frac{J_{k}(-n)}{\mu_k^2}.\end{align*}
Under Assumptions (A3),   $\tau_k=\max\{n: W^*_{k}(n)=0\}<0$. Therefore, the infinity sum is well-defined and equal to
 $$-\sum_{n=\tau_k}^{0}\frac{J_{k}(n)}{\mu_k^2}\doteq V_k^*,$$
which is an unbiased estimator for $\frac{\partial W^*_k(0)}{\partial \mu_k}$. 

Besides, the IPA construction also indicates a family of $\{\mathbf{W}^*(\boldsymbol{\eta}; 0):\boldsymbol{\eta}\in (\boldsymbol{\mu}-\delta\mathbf{1},\boldsymbol{\mu}+\delta\mathbf{1})\}_{n\leq 0}$, for some $\delta>0$ small enough, that are coupled such that for each $1\leq k\leq K$,
$$\frac{W^*_{k}(\mu_k+\theta; 0)-W^*_{k}(\mu_k; 0)}{\theta}\to V^*_k, \text{w.p.1 as }\theta\to 0$$
in the $\sigma$-field generated by $\{I(n), \mathbf{J}^*(n)\}_{n\leq 0}$. Now we consider such a family of $\{\mathbf{W}^*(\boldsymbol{\eta}; 0):\boldsymbol{\eta}\in (\boldsymbol{\mu}-\delta\mathbf{1},\boldsymbol{\mu}+\delta\mathbf{1})\}$. 
By definition,
\begin{scriptsize}\begin{align*}
&\frac{\partial E[S^*(\boldsymbol{\mu};0)]}{\partial \mu_k}\\
=&\lim_{\theta\to 0}\frac{E[f(\mathbf{W}^*(\boldsymbol{\mu}+\theta\cdot \mathbf{e}^k; 0), \mathbf{J}(0),\boldsymbol{\mu}+\theta\cdot \mathbf{e}^k)]-E[f(\mathbf{W}^*(\boldsymbol{\mu}; 0), \mathbf{J}(0), \boldsymbol{\mu})]}{\theta}.\end{align*}\end{scriptsize}
As $E[J_k],$ $E[\mathbf{W}^*(\boldsymbol{\mu}; 0)]$, $E[\mathbf{W}^*(\boldsymbol{\mu}+\theta\cdot \mathbf{e}^k; 0)]<\infty$, we have
\begin{scriptsize}\begin{align}\label{error}
&\frac{E[f(\mathbf{W}^*(\boldsymbol{\mu}+\theta\cdot \mathbf{e}^k; 0), \mathbf{J}(0),\boldsymbol{\mu}+\theta\cdot \mathbf{e}^k)]-E[f(\mathbf{W}^*(\boldsymbol{\mu}; 0), \mathbf{J}(0), \boldsymbol{\mu})]}{\theta}\nonumber\\
=&E\left[1(k=h(\mathbf{W}^*(0), \mathbf{J}(0), \boldsymbol{\mu}))\left\{\left[\frac{W_{k}^*(\mu_k+\theta; 0)-W_{k}^*(\mu_k; 0)}{\theta}\right.\right.\right.\nonumber\\
&\left.\left.\left.+\frac{\frac{J_k(0)}{\mu_k+\theta}-\frac{J_k(0)}{\mu_k}}{\theta}\right]+\sum_{j\neq k}1(j=h(\mathbf{W}^*(0), \mathbf{J}(0), \boldsymbol{\mu}+\theta\cdot \boldsymbol{e}^k))\right.\right.\nonumber\\
&\left.\left.\cdot\left[\frac{W^*_{j}(\mu_j; 0)-W_{k}^*(\mu_k+\theta; 0)}{\theta}+\frac{\frac{J_j(0)}{\mu_j}-\frac{J_k(0)}{\mu_k+\theta}}{\theta}\right]\right\}\right]
\end{align}\end{scriptsize}
Since $\theta\to 0$, we get
\begin{scriptsize}$$P(k=h(\mathbf{W}^*(0), \mathbf{J}(0), \boldsymbol{\mu})\text{ and }k\neq h(\mathbf{W}^*(0), \mathbf{J}(0), \boldsymbol{\mu}+\theta\cdot \boldsymbol{e}^k))\to 0.$$\end{scriptsize}
As \begin{footnotesize}$E[J_k(0)], E[\mathbf{W}^*(\boldsymbol{\mu}; 0)], E[\mathbf{W}^*(\boldsymbol{\mu}+\theta\cdot \mathbf{e}^k; 0)]<\infty$,\end{footnotesize} we conclude that
the expectation of the terms in \eqref{error} goes to 0 as $\theta\to 0$ by the dominance convergence theorem. 
On the other hand, we have
\begin{scriptsize}$$\frac{W_{k}^*(\mu_k+\theta; 0)-W_{k}^*(\mu_k; 0)}{\theta}\to V^*_k \text{ and }\frac{\frac{J_k(0)}{\mu_k+\theta}-\frac{J_k(0)}{\mu_k}}{\theta}\to -\frac{J_k(0)}{\mu_k^2}.$$\end{scriptsize}
By the dominance convergence theorem, we conclude that
\begin{scriptsize}\begin{align*}
&\frac{E[f(\mathbf{W}^*(\boldsymbol{\mu}+\theta\cdot \mathbf{e}^k; 0), \mathbf{J}(0),\boldsymbol{\mu}+\theta\cdot \mathbf{e}^k)]-E[f(\mathbf{W}^*(\boldsymbol{\mu}; 0), \mathbf{J}(0), \boldsymbol{\mu})]}{\theta} \\
\rightarrow~&E\left[1(k=h(\mathbf{W}^*(0), \mathbf{J}(0), \boldsymbol{\mu}))\left(V^*_k-\frac{J_{k}(0)}{\mu_k^2}\right)\right].
\end{align*}\end{scriptsize}

\end{proof}

We close this section by summarizing the algorithm:

\textbf{Algorithm: Perfect sampling and gradient simulation for FJQ}

1. Simulate $\{\mathbf{W}(-n)^*\}_{n\geq 0}$ backwards in time jointly with i.i.d. sequences $\{\mathbf{J}(n)\}$ and $\{I(n)\}$ until $n=N$ such that for all $k=1,2,..,K$, $A(-N)+W^*_{k}(-N)+J_{k}(N)=0$.

2. For $0\leq n\leq N$, compute $S^*(-n)=\max_{1\leq k\leq K} (W^*_{k}(-n)+J_{k}(n))$.

3. Compute $Q_k(0)$ and $D_k(0)$ according to \eqref{eq: QD}

4.  Let $k_0=\argmax_{1\leq l\leq K} (W^*_{l}(0)+J_{l}(0)/\mu_l)$. Get the value of $\tau_{k_0}=\max\{n: W^*_{k_0}(n)=0\}$. (One can check that $\tau_k\geq -N$ for all $k$.) Compute a vector $(\mathbf{H}^*)_{1\leq k\leq K}$ according to \eqref{eq: H}.

5. Output $S^*(0)$, $(Q_k(0), D_k(0))_{k=1}^K$ and $\mathbf{H}^*$.

\section{Numerical Experiments}
We implement the algorithm in MATLAB performed on a PC with an Intel Core i7-4790 CPU 3.60GHz, 16.00 GB of RAM. We first test the correctness of the algorithm by simulating a simple 2-station example where some analytic results on the stationary distributions are available. Then we simulate a 10-station example illustrating the efficiency of the algorithm.

\subsection{The 2-station case}
We consider a 2-station example where jobs arrive according to a Poisson process of rate $\lambda=1$. Job requirements $J_{1}(n)$ and $J_{2}(n)$ are independent and following an exponential distribution of rate $\mu$. In this simple case, the stationary expectation of the job sojourn time $S^*(0)$ and the number of unsynchronized tasks in the system $D^*(0)$ have closed form expressions as given in \cite{NT93} and \cite{ FH84}:
\begin{scriptsize}\begin{equation}\label{es}
E[S^*(0)] = \frac{12\mu - \lambda}{8\mu(\mu-\lambda)}\text{ and } E[D^*(0)] = \frac{\lambda(4\mu-\lambda)}{4\mu(\mu-\lambda)}.
\end{equation}\end{scriptsize}

\begin{table*}[t]
\centering
\caption{Simulation Results for $E[S^*(0)]$ and $E[\mathbf{D}^*(0)]$: $K = 2$,  $\{J_{1}(n), J_{2}(n) \}$ are i.i.d. exponential r.v.'s of rate $\mu$}
\label{213}
\begin{tabular}{|l|l|l|l|l|}
\hline 
 
 $\mu$& 1.8000 & 1.4000& 1.1000 & 1.0600\\ \hline
 
 $1-\rho$& 0.4444 & 0.2857  & 0.0909 & 0.0566\\ \hline
 
True E[$S^*(0)$]& 1.7882 & 3.5268 & 13.8636& 23.0346 \\ \hline
 
 Simulated E[$S^*(0)$]& 1.7901$\pm$0.0279 & 3.5094$\pm$0.0555& 14.0178$\pm$0.2218& 23.2005$\pm$0.3674\\ \hline
 
 True E[$D^*(0)]$& 1.0764 & 2.0536& 7.7273& 12.7358\\ \hline
 
 Simulated E[$D^*(0)$]& 1.0756$\pm$0.0275 & 2.0396$\pm$0.0483 & 7.8104$\pm$0.1697& 12.9087$\pm$0.2796\\ \hline
 
 Running time(s)  & 3.3032 & 5.1520& 37.5992& 134.9230\\ \hline 

\end{tabular}
\end{table*}

\begin{table*}[t]
\centering
\caption{Simulation Results for $\partial E[S^*(0)]/\partial \mu$: $K = 2$,  $\{J_{1}(n), J_{2}(n) \}$ are i.i.d. exponential r.v.'s of rate $\mu$}
\label{213}
\begin{tabular}{|l|l|l|l|l|}
\hline 
 
 $\mu$& 1.8000 & 1.4000& 1.1000 & 1.0600\\ \hline
 
 $1-\rho$& 0.4444 & 0.2857  & 0.0909 & 0.0566\\ \hline
 
True $\partial E[S^*(0)]/\partial \mu$& -2.1870 & -8.6575 & -137.6033 & -382.0557\\ \hline
 
 Simulated $\partial E[S^*(0)]/\partial \mu$& -2.1964$\pm$0.0527 & -8.5430$\pm$0.2147 & -136.4920$\pm$3.8702& -376.8422$\pm$10.6424\\ \hline
 
 Running time(s)  & 4.6017 & 6.9625& 50.4201& 144.9027\\ \hline 

\end{tabular}
\end{table*}

Table 1 compares the simulation estimations, the true values of  $E[S^*(0)]$ and $E[D^*(0)]$ for variety of $\mu$. For each $\mu$, we do 10000 round of simulation algorithm and the total running time is also reported in Table 1. To test the performance of our algorithm in heavy traffic, we let $1-\rho = 1- \lambda/\mu$ approach 0.

To test the validity of simulated $95\%$ CI, we simulated 1000 independent $95\%$ CIs for $S^*(0)$ and $D^*(0)$ when $\mu= 1.4$. For each CI, we generated 10000 independent sample paths. Out of the 1000 CIs, 956 cover the true value 3.5268($S^*(0)$) and 2.0536($D^*(0)$), which indicates that our estimator is unbiased.

Then we implement the gradient simulation algorithm to estimate $\frac{\partial E[S^*(0)]}{\partial\mu_1}$ and $\frac{\partial E[S^*(0)]}{\partial \mu_2}$. In the special case where the distribution of the service times at the two stations follows two independent exponentials with the same rate $\mu$ (i.e. $J_i\sim \exp(1)$ for $i = 1, 2$ and the service rate is $\mu$), using \eqref{es} we get an explicit expression for $\frac{\partial E[S^*(0)]}{\partial\mu}$:
$$\frac{\partial E[S^*(0)]}{\partial\mu} = \frac{2\lambda\mu - \lambda^2-12\mu^2}{8\mu^2(\mu-\lambda)^2},$$

Since $\mu_1 = \mu_2 = \mu$, we have
$$\frac{\partial E[S^*(0)]}{\partial\mu} = \frac{\partial E[S^*(0)]}{\partial \mu_1}+\frac{\partial E[S^*(0)]}{\partial \mu_2}.$$
Recall that  $H^*_1$ and $H^*_2$ are the unbiased estimators for $\frac{\partial E[S^*(0)]}{\partial\mu_1}$ and $\frac{\partial E[S^*(0)]}{\partial\mu_2}$ generated by our algorithm. Then, $H^*_1+H^*_2$ should be an unbiased estimator for $\frac{\partial E[S^*(0)]}{\partial\mu}$. Table 2 compares the simulation estimations and the true values of  $\frac{\partial E[S^*(0)]}{\partial\mu}$ for a variety values of $\mu$. For each $\mu$, we do 10000 round of simulation algorithm and the total running time is also reported in Table 2.

We test the validity of simulated $95\%$ CI by simulating 1000 independent $95\%$ CIs for $\partial E[S^*(0)]/\partial\mu$ when $\mu = 1.8$, whose theoretical value is -2.1870. Each $95\%$ CI is generated by sampling 10000 independent sample paths. Out of the 1000 CIs, 954 of them cover the true value, which shows the validity of our CIs.

\subsubsection{K=10}
We implement the case where jobs arrive according to a Poisson process of rate 1 and $\{J_{k}(n)\}$ are independently exponentially distributed of rate  $\mu_k=2-0.05k$ for $1\leq k \leq 10$. In this example, Station 10 is a "bottleneck" as its service rate is the smallest and the 10-th task is likely to be last to finish among all the tasks. Using our algorithm, we first estimate $E[S^*(0)]$ and $E[D_k^*(0)]$ for $1\leq k\leq 10$ based on 10000 independent round of simulation. The simulation estimation and the CI's are reported in Table 3. From the simulation results, we can see that the unsynchronized queue corresponding to station 10 ($E[D_k^*(0)]$) is the shortest.
\begin{table}[t]
\centering
\caption{Simulation Results for $ E[S^*(0)]$ and $E[\mathbf{D}^*(0)]$ ($K = 10$ and $\lambda = 1$)}
\label{10g}
\scalebox{0.9}{
\begin{tabular}{|c|cc|}
\hline

\multicolumn{3}{| l |}{$ E[S^*(0)]\pm 95\%$ CI: 3.8452$\pm$0.0384} \\ \hline\hline

$k$ & $\mu_k$ & Simulated $E[D_k^*(0)]$ \\ \hline
 
 1& 2.0000 &2.5445$\pm$0.0470 \\
 
2& 1.9500  &  2.4872$\pm$0.0469 \\
 
 3& 1.9000&  2.4266$\pm$0.0469\\
 
 4& 1.8500& 2.3554$\pm$0.0457 \\
 
 5& 1.8000& 2.2885$\pm$0.0448 \\
 
 6& 1.7500 & 2.1854$\pm$0.0442 \\
 
 7& 1.7000& 2.1194$\pm$0.0437 \\
 
 8& 1.6500& 2.0072$\pm$0.0420 \\
 
 9& 1.6000&1.8433$\pm$0.0395 \\
 
 10& 1.5500& 1.7216$\pm$0.0383\\ \hline
 
 \multicolumn{3}{| l |}{Running time(s): 8.7201} \\ \hline
  
\end{tabular}}
\end{table}

Table 4 reports the gradient simulation results. From the estimated derivative, we can also see that station 10 is the bottleneck station in the sense that $E[S^*(0)]$ is mostly sensitive to service rate $\mu_{10}$.\\

\begin{table}[t]
\centering
\caption{Simulation Results for $\partial E[S^*(0)]/\partial\boldsymbol{\mu}$ ($K = 10$ and $\lambda = 1$)}
\label{10g}
\scalebox{0.9}{
\begin{tabular}{|c|cc|}
\hline

$k$ & $\mu_k$ & Simulated $\partial E[S^*(0)]/\partial \mu_k$\\ \hline
 
 1& 2.0000 &-0.1159$\pm$0.0138 \\
 
2& 1.9500  &  -0.1488$\pm$0.0164\\
 
 3& 1.9000&  -0.1788$\pm$0.0187\\
 
 4& 1.8500& -0.2520$\pm$0.0252 \\
 
 5& 1.8000& -0.2955$\pm$0.0260 \\
 
 6& 1.7500 & -0.4248$\pm$0.0354 \\
 
 7& 1.7000&  -0.5390$\pm$0.0417\\
 
 8& 1.6500& -0.7325$\pm$0.0528\\
 
 9& 1.6000& -0.9924$\pm$0.0651\\
 
 10& 1.5500& -1.3449$\pm$0.0793\\ \hline
  
  \multicolumn{3}{| l |}{Running time(s): 7.7163} \\ \hline
  
\end{tabular}}
\end{table}

\noindent \textbf{Acknowledgement}

This paper is based upon work supported by the National
Science Foundation under Grant No.CMMI 1538102.

\end{document}